\theoremstyle{thmstyleone}%
\newtheorem{theorem}{Theorem}
\newtheorem{prop}[theorem]{Proposition}
\newtheorem{lemma}[theorem]{Lemma}
\newtheorem{cor}[theorem]{Corollary}%
\theoremstyle{thmstyletwo}%
\newtheorem{remark}{Remark}%
\theoremstyle{thmstylethree}%
\newcommand{\sumtwo}{\operatorname*{\sum\sum}}
\newcommand{\lz}{\left(}
\newcommand{\pz}{\right)}
\newcommand{\sumstar}{\sideset{}{^{*}}\sum}
\newcommand{\sumplus}{\sideset{}{^{+}}\sum}
\newcommand{\bfrac}[2]{\lz\frac{#1}{#2}\pz}
\newcommand{\mods}[1]{\: (\mathrm{mod} \: #1)}
\begin{document}
	
	\title[Exceptional characters and non-vanishing]{A note on exceptional characters and non-vanishing of Dirichlet $L$-functions}

	\author*[1]{\fnm{Martin} \sur{\v{C}ech}}\email{martinxcech@gmail.com}
	
	\author[1]{\fnm{Kaisa} \sur{Matom\"aki}}\email{ksmato@utu.fi}

	\affil[1]{\orgdiv{Department of Mathematics and Statistics}, \orgname{University of Turku}, \orgaddress{ \city{Turku}, \postcode{20014}, \country{Finalnd}}}

	\abstract{	We study non-vanishing of Dirichlet $L$-functions at the central point under the unlikely assumption that there exists an exceptional Dirichlet character. In particular we prove that if $\psi$ is a real primitive character modulo $D \in \mathbb{N}$ with $L(1, \psi) \ll (\log D)^{-25-\varepsilon}$, then, for any prime $q \in [D^{300}, D^{O(1)}]$, one has $L(1/2, \chi) \neq 0$ for almost all Dirichlet characters $\chi \mods{q}$.}

	\keywords{Dirichlet L-functions, exceptional character, non-vanishing}

	\pacs[MSC Classification]{11M20}
	
	\maketitle
	
	\section{Introduction}
	A central problem in analytic number theory is the study of vanishing or non-vanishing of $L$-functions at the central point. Some arithmetic consequences arise for example due to the Birch and Swinnerton-Dyer conjecture, which links the order of the central zero of an elliptic curve $L$-function with its rank (see for example~\cite{Wi}). Another application was provided by Iwaniec and Sarnak~\cite{IS2}, who proved that at least 50\% of $L$-functions in certain families of cusp forms do not vanish at the central point, and showed that any improvement on this proportion would rule out the existence of Landau-Siegel zeros.

	A conjecture attributed to Chowla states that $L(1/2,\chi)\neq 0$ for all Dirichlet characters $\chi$ (see~\cite{Ch} for the conjecture in the case of real Dirichlet characters). In this paper we study the non-vanishing of Dirichlet $L$-functions at the central point under the unlikely assumption that there exists an exceptional Dirichlet character. 
	
	Unconditionally, Balasubramanian and Murty~\cite{BM} were the first to show that for any sufficiently large prime $q$, one has that $L(1/2, \chi) \neq 0$ for a positive proportion of Dirichlet characters $\chi \mods{q}$. This result was significantly improved by Iwaniec and Sarnak~\cite{IS}, who obtained the non-vanishing proportion $1/3-\varepsilon$ (also for non-prime $q$). The best known result for prime moduli is $5/13-\varepsilon$ due to Khan, Mili\'cevi\'c, and Ngo~\cite{KMN}. 
	
	The works of Murty~\cite{Murty} and Bui, Pratt, and Zaharescu~\cite{BPZ} prove the non-vanishing proportion $1/2-o(1)$ conditionally --- Murty under the generalized Riemann hypothsesis, and Bui, Pratt, and Zaharescu under the existence of an exceptional character with modulus of suitable size.

	In this paper we improve on the result of Bui, Pratt, and Zaharescu. In particular we obtain the following corollary, improving their proportion $1/2-o(1)$ to $1-o(1)$.
	
	\begin{cor}\label{cor - main result}
		Let $\varepsilon > 0$ be fixed. Let $D > 1$ be a squarefree fundamental discriminant and let $\psi$ be the associated primitive quadratic character modulo $D$. Assume that \begin{equation} \label{eq:L1upperbound}
			L(1,\psi)\ll \frac{1}{(\log D)^{25+\varepsilon}}.
		\end{equation}
		Then, for any fixed $C>300$ and any prime $q$ such that 
		\begin{equation*}
			D^{300}\leq q\leq D^C,
		\end{equation*} we have
		\begin{equation*}
			\left\lvert\{\chi \mods{q} \colon L(1/2,\chi)\neq 0 \}\right\rvert=(1+o(1))\varphi(q),
		\end{equation*}
		where the rate of convergence of $o(1)$ depends only on $\varepsilon, C$ and the implied constant in~\eqref{eq:L1upperbound}. 
	\end{cor}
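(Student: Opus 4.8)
The natural route is the mollification method, with a mollifier adapted to the exceptional character. Let $M(\chi)=\sum_{m\le M}\frac{a_m\chi(m)}{\sqrt m}$ be a Dirichlet polynomial of length $M=q^{\theta}$ for a suitable fixed $\theta>0$, with coefficients $a_m$ to be chosen below, and apply Cauchy--Schwarz over the characters modulo the prime $q$:
\[
\left|\sum_{\chi\bmod q}L(1/2,\chi)M(\chi)\right|^2\le\left|\{\chi\bmod q:L(1/2,\chi)\neq 0\}\right|\cdot\sum_{\chi\bmod q}\left|L(1/2,\chi)M(\chi)\right|^2 .
\]
It then suffices to evaluate the mollified first and second moments and to show that the second moment is at most $(1+o(1))$ times the \emph{square} of the first moment, which immediately gives the non-vanishing proportion $1-o(1)$. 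Unconditionally the mollified second moment exceeds the square of the first by a genuine constant factor, and removing this loss is exactly what the smallness of $L(1,\psi)$ buys us: in the presence of an exceptional character the arithmetic variance of $L(1/2,\chi)$ over the family collapses.

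The choice of $a_m$ is where the exceptional character enters. A Siegel zero of $L(s,\psi)$ makes the nonnegative multiplicative function $\1*\psi$ behave, over the ranges that matter, like the neutral element for Dirichlet convolution --- so $\psi$ acts much like $\mu$, in the same spirit in which Heath-Brown deduced the infinitude of twin primes from a Siegel zero. Accordingly I would mollify not $L(s,\chi)$ via the coefficients $\mu(m)\chi(m)$ of $1/L(s,\chi)$, but rather the product $L(s,\chi)L(s,\chi\psi)=\sum_n(\1*\psi)(n)\chi(n)n^{-s}$ via the Dirichlet-inverse coefficients of $\1*\psi$, smoothed by a polynomial $P$ in $\log(M/m)/\log M$; this is a ``$\psi$-twisted'' mollifier. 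With such a choice the first moment is straightforward: insert the approximate functional equation for $L(1/2,\chi)$, expand $M(\chi)$, apply orthogonality of the characters modulo $q$, and extract the diagonal. Since the $L$-sum and the mollifier both have length below $q$, the off-diagonal contributes only an error term, and one is left with a main term of size $\asymp 1$ (with implied constant depending on $P$), in which $L(1,\psi)$ appears only through lower-order factors.

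The mollified second moment $\sum_{\chi\bmod q}|L(1/2,\chi)|^2|M(\chi)|^2$ is the heart of the argument. Apply the approximate functional equation for $|L(1/2,\chi)|^2$, which produces a divisor-type double sum $\sum_{n_1,n_2}\frac{\chi(n_1)\overline{\chi(n_2)}}{\sqrt{n_1n_2}}$ with $n_1n_2\ll q$; multiply by $|M(\chi)|^2=\sum_{m_1,m_2}\frac{a_{m_1}\overline{a_{m_2}}}{\sqrt{m_1m_2}}\chi(m_1)\overline{\chi(m_2)}$ and use orthogonality modulo $q$. What survives is a diagonal contribution from $n_1m_1=n_2m_2$ together with an off-diagonal one from $n_1m_1\equiv n_2m_2\pmod q$ with $n_1m_1\neq n_2m_2$; the latter is an error term, handled by standard bounds for incomplete character sums (or trivially once $\theta$ is small enough). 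Summing $\1_{n_1m_1=n_2m_2}$ against the mollifier coefficients and the smooth weights, the diagonal factors as the square of the first-moment main term times an Euler product $\prod_p E_p$. Unconditionally $\prod_p E_p$ is a constant strictly larger than $1$; but under the hypothesis $E_p$ is close to $1$ at exactly those primes with $\psi(p)=-1$, and the smallness of $L(1,\psi)$ forces $\sum_p\frac{1+\psi(p)}{p}(\log p)^{O(1)}$ to be small --- most primes in the relevant range are inert for $\psi$ --- so that $\prod_p E_p=1+o(1)$. Tracking the powers of $\log$ through this estimate, and through the usual conversions between the size of $L(1,\psi)$, the location of the Siegel zero, and the count of inert primes, is what pins down the exponent $25+\varepsilon$; since $D^{300}\le q\le D^C$ gives $\log q\asymp\log D$, the same smallness holds with $\log q$ in place of $\log D$, while the lower bound $q\ge D^{300}$ supplies the room needed to absorb the various errors.

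The main obstacle I anticipate is precisely this last point: showing that the Euler-product excess is genuinely $o(1)$ and not merely $O(1)$. This requires a quantitatively sharp version of the principle ``a small $L(1,\psi)$ makes most small primes inert'', in exactly the weighted form dictated by the smoothing polynomial $P$, together with uniform control of every error term --- the off-diagonal in the second moment, the tails of both approximate functional equations, and the contribution of the few non-inert primes --- so that none of them reintroduces a constant-size loss. A secondary difficulty is calibrating $\theta$: the mollifier must be long enough for the first moment to recover $L(1/2,\chi)$ efficiently on average, yet short enough to keep the off-diagonal under control, and it is the extra slack afforded by the exceptional character that lets this balance be struck with the non-vanishing proportion tending to $1$.
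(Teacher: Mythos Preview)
Your proposal has a genuine gap: the Cauchy--Schwarz route you outline is precisely the approach of Bui, Pratt and Zaharescu, and it yields the proportion $\tfrac12-o(1)$, not $1-o(1)$. The loss is not an Euler-product factor that the exceptional character deflates to $1+o(1)$; it comes from the root number in the approximate functional equation. With the mollifier $M(\chi)$ built from the Dirichlet inverse of $1\ast\psi$, one has for each even primitive $\chi$ the individual approximation
\[
L(1/2,\chi)L(1/2,\chi\psi)M(\chi)=1+\varepsilon(\chi)\varepsilon(\chi\psi)+O(|B_1(\chi)|+|B_2(\chi)|+Q^{-1/2+\varepsilon}),
\]
and under the hypothesis the $B_j$ are small in mean square. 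Thus the mollified product is essentially $1+\varepsilon(\chi)\varepsilon(\chi\psi)$, a point on the circle $|z-1|=1$. Averaging, the first moment is $(1+o(1))\varphi^+(q)$ while the second moment is $\sum_\chi|1+\varepsilon(\chi)\varepsilon(\chi\psi)|^2+o(\varphi^+(q))=(2+o(1))\varphi^+(q)$, since $|1+e^{i\theta}|^2=2+2\cos\theta$ has mean $2$. No amount of smallness of $L(1,\psi)$ removes this factor of $2$; it is already present after the arithmetic has been perfectly mollified. (If instead you keep a single $L(1/2,\chi)$ and multiply by the $\psi$-twisted mollifier, then $M(\chi)$ is approximating $1/L(1/2,\chi\psi)$ rather than $1/L(1/2,\chi)$, and the second moment does not collapse either.)

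The paper's proof discards Cauchy--Schwarz altogether and exploits the displayed individual formula directly. The missing ingredient in your sketch is the equidistribution of the product of root numbers: one shows (via a hyper-Kloosterman bound) that $\varepsilon(\chi)\varepsilon(\chi\psi)$ is equidistributed on the unit circle as $\chi$ ranges over even primitive characters mod $q$. Consequently $|1+\varepsilon(\chi)\varepsilon(\chi\psi)|\ge 2\delta$ for all but $O(\delta\varphi(q))$ characters, while Chebyshev on $\sum_\chi|B_j(\chi)|^2$ confines the characters with $|B_j(\chi)|\ge \delta/C$ to an exceptional set of size $O\bigl(\delta^{-2}L(1,\psi)q(\log q)^{25+\varepsilon}+\delta^{-2}q(\log q)^{-1+\varepsilon}\bigr)$. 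Outside these sets one reads off $|L(1/2,\chi)L(1/2,\chi\psi)M(\chi)|>\delta$ pointwise, hence $L(1/2,\chi)\neq 0$, and choosing $\delta=(\log q)^{-\varepsilon/4}$ gives the claimed $1-o(1)$.
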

	
	\begin{remark}
		\label{rem:improveexc}
		It is feasible that it is possible to loosen the condition~\eqref{eq:L1upperbound} to the condition that $L(1, \psi) = o(1/\log D)$. This would require reworking the arguments in~\cite{BPZ} with a more optimally chosen mollifier than~\eqref{eq:Mdef} below, and being very careful about not losing any logarithmic factors. It might be possible to carry this out by adapting the arguments from~\cite{CI}. In~\cite{CI} Conrey and Iwaniec considered a related problem, showing that if $L(1, \psi) = o(1/\log D)$, then, for any Dirichlet $L$-function, almost all zeros, whose imaginary part is on a suitable range, are simple and lie on the critical line.
	\end{remark}
	
	As in~\cite{BPZ}, we actually get a more quantitative result --- the following holds unconditionally, but is non-trivial only in case an exceptional character exists.
	
	\begin{theorem}\label{thm - main result}
		Let $\varepsilon > 0$ be fixed. Let $D > 1$ be a squarefree fundamental discriminant and let $\psi$ be the associated primitive quadratic character modulo $D$. Let $C>300$ be fixed and let $q$ be a prime such that
		\begin{equation}
			\label{eq:qRange}
			D^{300}\leq q\leq D^C.
		\end{equation}
		Then, for any $\delta > 0$,
		\begin{align}
			\label{eq:maintheorem}
			\frac{1}{\varphi(q)}\sum_{\chi\mods q} \mathbf{1}_{\left\lvert L(1/2,\chi)\right\rvert \geq \frac{\delta^{3/2}}{(\log q)^{9/2}}} = 1 + O \left( \delta^{-2} L(1,\psi) (\log q)^{25+\varepsilon}+ \frac{\delta^{-2}}{(\log q)^{1-\varepsilon}} + \delta \right).
		\end{align}
	\end{theorem}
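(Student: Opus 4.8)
\emph{Method.} We run the mollification method, applied not to $L(1/2,\chi)$ itself but to the product
$F(\chi):=L(1/2,\chi)L(1/2,\chi\psi)=\sum_{n\ge1}(1*\psi)(n)\chi(n)n^{-1/2}$,
whose Dirichlet coefficients $(1*\psi)(n)=\sum_{d\mid n}\psi(d)$ are nonnegative, and noting that $F(\chi)\ne0$ forces $L(1/2,\chi)\ne0$. The point of inserting the twist is that, under~\eqref{eq:L1upperbound} and with $q$ in the range~\eqref{eq:qRange} (so that $D$ is a small power of $q$), almost every prime $p\le q^{O(1)}$ is inert for $\psi$: one has $\sum_{p\le x,\,\psi(p)=1}p^{-1}\ll L(1,\psi)(\log q)^{O(1)}$ for $x$ a fixed power of $D$, which is the usual consequence of $L(1,\psi)$ being small (equivalently, of $L(s,\psi)$ having a zero very close to $s=1$). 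Hence $(1*\psi)(p)=1+\psi(p)=0$ for almost all primes, the coefficients $(1*\psi)(n)$ are concentrated on perfect squares, and $F(\chi)$ is, up to a controlled error, a short Dirichlet polynomial in $\chi^{2}$. The same is true of the coefficients $(\mu)*(\mu\psi)$ of $1/F$; the mollifier $M(\chi)$ of~\eqref{eq:Mdef} is a truncation (of length $q^{\vartheta}$, with $1/2<\vartheta<1$, the various lengths possibly depending on $\delta$) and polynomial smoothing of these. Writing $N(\chi):=M(\chi)L(1/2,\chi\psi)$ — a Dirichlet polynomial times a single $L$-value — we then have the exact identity $L(1/2,\chi)N(\chi)=F(\chi)M(\chi)$, so $N$ plays the role of a (very long, and $\psi$-adapted) mollifier of $L(1/2,\chi)$.

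\emph{Reduction.} It suffices to establish the two mollified moments
$\mathcal M_{1}:=\varphi(q)^{-1}\sum_{\chi\mods q}F(\chi)M(\chi)=1+O(E)$ and
$\mathcal M_{2}:=\varphi(q)^{-1}\sum_{\chi\mods q}|F(\chi)M(\chi)|^{2}=1+O(E)$,
together with the bound $\varphi(q)^{-1}\sum_{\chi\mods q}|N(\chi)|^{2}\ll(\log q)^{9}$, where $E=\delta^{-2}L(1,\psi)(\log q)^{25+\varepsilon}+\delta^{-2}(\log q)^{-1+\varepsilon}+\delta$. Indeed, these give $\varphi(q)^{-1}\sum_{\chi\mods q}|F(\chi)M(\chi)-1|^{2}=\mathcal M_{2}-2\,\mathrm{Re}\,\mathcal M_{1}+1=O(E)$, so by Chebyshev $|F(\chi)M(\chi)-1|<1/2$ — whence $L(1/2,\chi)\ne0$ and $|L(1/2,\chi)|\ge\frac12|N(\chi)|^{-1}$ — for all but $O(E\varphi(q))$ characters $\chi\mods q$. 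A second application of Chebyshev, to $\varphi(q)^{-1}\sum_{\chi\mods q}|N(\chi)|^{2}\ll(\log q)^{9}$, shows $|N(\chi)|\le\frac12\delta^{-3/2}(\log q)^{9/2}$ outside a set of size $\ll\delta^{3}(\log q)^{-9}\cdot(\log q)^{9}\varphi(q)=\delta^{3}\varphi(q)$; the threshold $\delta^{3/2}(\log q)^{-9/2}$ in~\eqref{eq:maintheorem} is calibrated precisely so that this is $\ll\delta\varphi(q)$. Combining the two exceptional sets yields $|L(1/2,\chi)|\ge\delta^{3/2}(\log q)^{-9/2}$ for all but $O\bigl((E+\delta)\varphi(q)\bigr)$ characters, which is~\eqref{eq:maintheorem}; Corollary~\ref{cor - main result} follows by taking $\delta$ a suitable small power of $1/\log q$, the $\varepsilon$ in~\eqref{eq:L1upperbound} providing the room.

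\emph{The moments, and the main obstacle.} Both $\mathcal M_{1}$ and $\mathcal M_{2}$ are evaluated by opening $F(\chi)$ (resp.\ $|F(\chi)|^{2}$) via an approximate functional equation — $F$ has degree $2$ and conductor $q^{2}D\le q^{2+1/150}$ — and invoking the orthogonality relations for characters modulo the prime $q$. In the diagonal frequency the convolution identity $(1*\psi)*\bigl((\mu)*(\mu\psi)\bigr)=\delta$ (the $\delta$-function) collapses the sum to its first term, producing the main term $1$; the surviving off-diagonal frequencies $n_{1}\equiv\pm n_{2}\mods{q}$ require, by the square-concentration of the coefficients, $\sqrt{n_{1}}\equiv\pm\sqrt{n_{2}}\mods{q}$ with each $\sqrt{n_{i}}$ a small power of $q$ (this is where $\vartheta<1$ and $D\le q^{1/300}$ are used), which has no nontrivial solutions, so no secondary main term appears — and it is this absence that lifts the non-vanishing proportion from $1/2-o(1)$ to $1-o(1)$. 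The error $E$ then collects three contributions: the integers divisible by a split prime to an odd power — i.e.\ the failure of $(1*\psi)$ to be exactly square-supported — which contribute $\ll L(1,\psi)(\log q)^{O(1)}$, and holding this loss below $(\log q)^{25+\varepsilon}$ is precisely where the unoptimised mollifier of~\eqref{eq:Mdef} enters and is responsible for the numerical exponent (a sharper, $\psi$-adapted mollifier, in the spirit of Remark~\ref{rem:improveexc}, would reduce it); the unconditional $(\log q)^{-1+\varepsilon}$ from Siegel--Walfisz estimates and the tails of the approximate functional equation; and the $O(\delta)$ loss from the mollifier truncation and the Chebyshev step above, the smooth cut-offs used to separate ranges being responsible for the $\delta^{-2}$ weights. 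The hardest point is the dual, root-number part of the approximate functional equation for $|F(\chi)|^{2}$: after the Gauss-sum evaluation of $\varphi(q)^{-1}\sum_{\chi}\epsilon_{\chi}\epsilon_{\chi\psi}\chi(\cdot)\overline{\chi}(\cdot)$ it must be shown not to produce a secondary main term, all while tracking every logarithmic factor, since the entire argument hinges on losing no more than $(\log q)^{25}$ there.
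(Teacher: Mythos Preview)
Your reduction hinges on the claim $\mathcal M_{2}=1+O(E)$, and this is where the argument breaks. The approximate functional equation for $F(\chi)=L(1/2,\chi)L(1/2,\chi\psi)$ has the shape
\[
F(\chi)M(\chi)\;=\;A(\chi)\;+\;\varepsilon(\chi)\varepsilon(\chi\psi)\,B(\chi),
\]
where $A(\chi)$ and $B(\chi)$ are the mollified direct and dual sums. After separating the diagonal and bounding the $B_j(\chi)$, one has $A(\chi)=1+o(1)$ \emph{and} $B(\chi)=1+o(1)$ for most $\chi$ (this is exactly~\eqref{mollified L-function2} in the paper). Consequently
\[
|F(\chi)M(\chi)|^{2}\;=\;|A(\chi)|^{2}+|B(\chi)|^{2}+2\,\mathrm{Re}\bigl(\varepsilon(\chi)\varepsilon(\chi\psi)\,\overline{A(\chi)}B(\chi)\bigr),
\]
and averaging over $\chi$ the cross term --- the one you single out as ``the hardest point'' --- does indeed vanish, but the term $|B(\chi)|^{2}$ contributes a full diagonal main term of size $1$, just as $|A(\chi)|^{2}$ does. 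Thus $\mathcal M_{2}=2+O(E)$, not $1+O(E)$; your square-support heuristic controls genuine off-diagonal terms $n_1\equiv\pm n_2\mods q$ with $n_1\neq n_2$, but does nothing to suppress the honest diagonal of the dual sum, which carries no root-number factor since $|\varepsilon(\chi)\varepsilon(\chi\psi)|^{2}=1$. Plugging the correct value into your variance gives $\mathcal M_{2}-2\,\mathrm{Re}\,\mathcal M_{1}+1=1+O(E)$, and Chebyshev yields nothing beyond the $1/2-o(1)$ proportion already obtained by Bui--Pratt--Zaharescu via Cauchy--Schwarz.

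The paper's route around this obstruction is precisely the piece you are missing: rather than averaging and losing the phase information, one works pointwise with
\[
\bigl|F(\chi)M(\chi)-\bigl(1+\varepsilon(\chi)\varepsilon(\chi\psi)\bigr)\bigr|\le C_0\bigl(|B_1(\chi)|+|B_2(\chi)|+Q^{-1/2+\varepsilon}\bigr),
\]
and proves (Proposition~\ref{prop - equidistribution}, via Weyl sums and hyper-Kloosterman bounds) that the root-number products $\varepsilon(\chi)\varepsilon(\chi\psi)$ are equidistributed on the unit circle as $\chi$ varies. Hence $|1+\varepsilon(\chi)\varepsilon(\chi\psi)|\ge 2\delta$ outside a set of density $O(\delta)$, and combining this with the second-moment bound on the $B_j$ gives $|F(\chi)M(\chi)|\ge\delta$ for almost all $\chi$. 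Your treatment of $|N(\chi)|^{2}=|L(1/2,\chi\psi)M(\chi)|^{2}$ and the second Chebyshev step to extract the threshold $\delta^{3/2}(\log q)^{-9/2}$ is the same as the paper's, but the heart of the proof --- replacing the lossy second-moment/Cauchy--Schwarz step by the equidistribution of $\varepsilon(\chi)\varepsilon(\chi\psi)$ --- is absent from your proposal.
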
	
	
	Corollary~\ref{cor - main result} immediately follows from applying Theorem~\ref{thm - main result} with $\delta = (\log q)^{-\varepsilon/4}$ and $\varepsilon/3$ in place of $\varepsilon$. In Theorem~\ref{thm - main result} and other statements, the implied constants are allowed to depend on $\varepsilon$ and $C$ (which are said to be fixed), but not on $D$ or $q$.  
	
	\begin{remark} We have not tried to optimize the lower bound we get for $\left\lvert L(1/2, \chi)\right\rvert$ in Theorem~\ref{thm - main result}. By estimating the left hand side of~\eqref{eq:Rest} below more carefully, it would probably be possible to improve the power of $\log q$ in the lower bound. Furthermore, similarly to Remark~\ref{rem:improveexc}, it might be possible to improve on the error term. 
	\end{remark}

	As in many works, we consider only the even primitive characters, the case of odd primitive characters being handled similarly (since $q$ is prime, there is only one non-primitive character $\chi_0$ so its contribution is negligible). We write $\sumplus$ for a sum over primitive even characters modulo $q$, and $\varphi^+(q)$ for the number of such characters.
	
	Our proof is based on the work of Bui, Pratt and Zaharescu \cite{BPZ} and the equidistribution of the product $\varepsilon(\chi)\varepsilon(\psi \chi)$ of root numbers. Here and later, $\varepsilon(\chi)$ denotes the sign of the functional equation of $L(s,\chi)$, which can also be written as a normalized Gauss sum
	\begin{equation}\label{sign as Gauss sum}
		\varepsilon(\chi) :=\frac{\tau(\chi)}{q^{1/2}}=\frac{1}{q^{1/2}}\sum_{\substack{a\mods q \\ (a, q) = 1}}\chi(a)e\bfrac aq.
	\end{equation}
	The following proposition which we will prove in Section~\ref{sec:proofofProp} shows that $\varepsilon(\chi)\varepsilon(\psi \chi)$ is equidistributed on the unit circle when $\chi$ runs over even primitive characters modulo $q$.
	
	\begin{prop}\label{prop - equidistribution}
		Let $D > 1$ be a squarefree fundamental discriminant and let $\psi$ be the associated primitive quadratic character modulo $D$. Let $q$ be a prime such that $q \nmid D$.
		
		For each character $\chi \mods{q}$, let $\theta_{\chi} \in (0, 2\pi]$ be such that $\varepsilon(\chi) \varepsilon(\chi \psi) = e(\theta_\chi)$. Then, for any interval $(\alpha, \beta] \subseteq (0, 2\pi]$, we have
		\begin{equation*}
			\frac{1}{\varphi^+(q)}\,\, \sumplus_{\chi \mods{q}} \mathbf{1}_{\theta_{\chi} \in (\alpha, \beta]} = \beta-\alpha + O(q^{-1/4}),
		\end{equation*}
		where the implied constant is absolute.
	\end{prop}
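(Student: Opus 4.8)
The plan is to deduce equidistribution from the Erd\H{o}s--Tur\'an inequality, which reduces the proposition to an upper bound for the Weyl sums
\[
	W_k := \sumplus_{\chi \mods{q}} \bigl(\varepsilon(\chi)\,\varepsilon(\chi\psi)\bigr)^{k}, \qquad k \geq 1.
\]
It will be enough to show $|W_k| \ll_k q^{1/2}$, hence $|W_k|/\varphi^+(q) \ll k\,q^{-1/2}$; taking the truncation parameter $K = \lceil q^{1/4}\rceil$ in Erd\H{o}s--Tur\'an then bounds the discrepancy of $\{\varepsilon(\chi)\varepsilon(\chi\psi)\}_{\chi}$ by $\ll 1/K + \sum_{1 \le k \le K}\tfrac1k\cdot k\,q^{-1/2} \ll q^{-1/4}$, which is exactly the claim. (Observe first that for $\chi$ even primitive modulo the prime $q$ both $\chi$ and $\chi\psi$ are even primitive, of conductors $q$ and $qD$ respectively since $q \nmid D$, so $|\varepsilon(\chi)| = |\varepsilon(\chi\psi)| = 1$ and the points $\varepsilon(\chi)\varepsilon(\chi\psi)$ do lie on the unit circle.)

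To estimate $W_k$, I would first simplify the summand. Since $\gcd(q,D) = 1$, multiplicativity of Gauss sums gives $\tau(\chi\psi) = \chi(D)\psi(q)\,\tau(\chi)\tau(\psi)$, while $\psi$ is even (as $D > 0$), so $\tau(\psi) = D^{1/2}$ and $\varepsilon(\psi) = 1$; hence $\varepsilon(\chi\psi) = \chi(D)\psi(q)\varepsilon(\chi)$ and
\[
	\bigl(\varepsilon(\chi)\varepsilon(\chi\psi)\bigr)^{k} = \psi(q)^{k}\,\chi(D^{k})\,\varepsilon(\chi)^{2k} = \frac{\psi(q)^{k}}{q^{k}}\,\chi(D^{k})\,\tau(\chi)^{2k}.
\]
Next I would expand $\tau(\chi)^{2k}$ as a sum over $2k$-tuples $(x_1,\dots,x_{2k})$ of residues coprime to $q$, interchange the order of summation, and evaluate the inner sum $\sumplus_{\chi \mods{q}}\chi(D^{k}x_1\cdots x_{2k})$ by orthogonality. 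For prime $q$ the even primitive characters are exactly the non-principal even ones, so $\sumplus_{\chi}\chi(n) = \tfrac{q-1}{2}\bigl(\mathbf{1}_{n\equiv 1\mods{q}} + \mathbf{1}_{n\equiv -1\mods{q}}\bigr) - \mathbf{1}_{(n,q)=1}$; since $n = D^{k}x_1\cdots x_{2k}$ is always coprime to $q$, this produces a main term supported on tuples with $x_1\cdots x_{2k} \equiv \pm D^{-k}\mods{q}$, weighted by $e\bigl((x_1+\cdots+x_{2k})/q\bigr)$, together with a contribution of size $O(q^{-k})$ coming from the removal of the principal character (using $\bigl(\sum_{(a,q)=1}e(a/q)\bigr)^{2k} = 1$).

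Solving the multiplicative constraint for $x_{2k}$ realizes each of the two main-term sums, up to the factor $\tfrac{q-1}{2}\,q^{-k}$, as a hyper-Kloosterman sum
\[
	\mathrm{Kl}_{2k}(\pm D^{-k};q) = \sumstar_{x_1,\dots,x_{2k-1}\mods{q}} e\!\left(\frac{x_1+\cdots+x_{2k-1}\pm D^{-k}\,\overline{x_1\cdots x_{2k-1}}}{q}\right),
\]
so Deligne's square-root cancellation $|\mathrm{Kl}_{2k}(a;q)| \le 2k\,q^{(2k-1)/2}$ (applicable because $q \nmid D$) yields $|W_k| \le 2k(q-1)q^{-1/2} + q^{-k} \ll_k q^{1/2}$, as required. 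The one genuinely deep input is precisely this estimate for hyper-Kloosterman sums: for $k = 1$ it is Weil's bound for classical Kloosterman sums, but for larger $k$ one really needs the Riemann Hypothesis for the associated Kloosterman sheaf --- the trivial bound $|W_k| \le \varphi^+(q)$ would only give $\sum_{k\le K}\tfrac1k\cdot O(1) = O(\log K)$, far too weak for any power saving. The remaining ingredients --- the Gauss-sum identities, character orthogonality, and the accounting of the lower-order terms --- are entirely routine.
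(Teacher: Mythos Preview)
Your proposal is correct and follows essentially the same route as the paper: Erd\H{o}s--Tur\'an with cutoff $\asymp q^{1/4}$, the twisted-multiplicativity identity $\varepsilon(\chi\psi)=\chi(D)\psi(q)\varepsilon(\psi)\varepsilon(\chi)$, expansion of $\tau(\chi)^{2k}$ and orthogonality over even primitive $\chi$, and finally the square-root bound for $(2k-1)$-dimensional Kloosterman sums. The only cosmetic differences are that the paper keeps the factor $\varepsilon(\psi)$ (of modulus~$1$) rather than evaluating it to~$1$, and cites Smith's elementary treatment of the hyper-Kloosterman bound in place of Deligne.
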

	
	\subsection*{Acknowledgements}
	The authors thank the anonymous referee for a careful reading of the manuscript and valuable comments.
	The first author was supported by Academy of Finland grant no. 333707 and the second author was supported by Academy of Finland grant no. 285894.

	\section{The work of Bui, Pratt and Zaharescu}
	Let $\varepsilon, D, \psi, C,$ and $q$ be as in Theorem~\ref{thm - main result}. Following~\cite{BPZ}, for any character $\chi \mods{q}$, we write
	$$
	L_\chi(s):=L(s,\chi)L(s,\chi\psi)=\sum_{n\geq 1}\frac{(1*\psi)(n)\chi(n)}{n^s}.
	$$
	Note that Theorem~\ref{thm - main result} is non-trivial only when $\psi$ is an exceptional character modulo $D$ (in the sense that~\eqref{eq:L1upperbound} holds for some $\varepsilon > 0$), and in this case we expect $1*\psi(n)$ to vanish often once $n>D^2$, say (see for example~\cite[formula (2.2)]{BPZ}).
	
	Bui, Pratt and Zaharescu~\cite{BPZ} consider the mollified $L$-functions $L_\chi(1/2)M(\chi)$, where the mollifier is taken as	
	\begin{equation}
		\label{eq:Mdef}
		M(\chi):=\sum_{\substack{n\leq X,\\ D\nmid n}}\frac{(\mu*\mu\psi)(n)\chi(n)}{\sqrt n}
	\end{equation}
	with $X := D^{20}$. In the classical setting, it is crucial to take the mollifier as long as possible to make many of the coefficients of the mollified $L$-function vanish; in the exceptional case, the coefficients $(1*\psi)(n)$ of $L_\chi(1/2)$ are lacunary once $n$ is larger than a small power of $D$, so taking $X=q^{\kappa}$ for a small $\kappa>0$ is sufficient. This explains why we can obtain better non-vanishing results assuming the existence of exceptional characters.
	
	For convenience, we make the same choice of parameters as \cite{BPZ}, so that $X=D^{20}$ and~\eqref{eq:qRange} holds.
	
	Write $Q = q\sqrt{D}/\pi$. Then \cite[formula (4.1)]{BPZ} yields
	\begin{equation}\label{mollified L-function}
		L_\chi(1/2)M(\chi)=V_1\left(\frac{1}{Q}\right)+\varepsilon(\chi)\varepsilon(\chi\psi) V_2\left(\frac{1}{Q}\right)+O\lz\left\lvert B_1(\chi)\right\rvert+\left\lvert B_2(\chi)\right\rvert\pz,
	\end{equation}
	where, for $j = 1, 2$, $V_j(x)$ is a smooth weight as in~\cite[Section 3]{BPZ} and
	\[
	B_j(\chi):=\sum_{\substack{a\leq X,\\ D\nmid a,\\an>1}}\frac{(\mu*\mu\psi)(a)(1*\psi)(n)\chi(an)}{\sqrt{an}}V_j\bfrac{n}{Q}.
	\]
	The formula~\eqref{mollified L-function} is obtained in~\cite{BPZ} using the approximate functional equation (see~\cite[Lemma 3.2]{BPZ}) and isolating the first summand in each term. By~\cite[Lemma 3.4]{BPZ} we have $V_j(1/Q) = 1 + O(Q^{-1/2+\varepsilon})$ for $j=1, 2$ and hence~\eqref{mollified L-function} implies that, for some absolute constant $C_0 \geq 1$,
	\begin{equation}\label{mollified L-function2}
		\left\lvert L_\chi(1/2)M(\chi)-(1 +\varepsilon(\chi)\varepsilon(\chi\psi))\right\rvert \leq C_0\left(\left\lvert B_1(\chi)\right\rvert+\left\lvert B_2(\chi)\right\rvert + Q^{-1/2+\varepsilon}\right).
	\end{equation}
	
	Furthemore, \cite[Proposition 4.1]{BPZ} gives that, for $j = 1, 2$, any $\varepsilon > 0$, and any prime $q$ in the range~\eqref{eq:qRange},
	\begin{equation}\label{second moment of B_i}
		\sumplus_{\chi\mods q}\left\lvert B_j(\chi)\right\rvert ^2\ll L(1,\psi) q(\log q)^{25+\varepsilon}+\frac{q}{(\log q)^{1-\varepsilon}}. 
	\end{equation}
	
	The strategy of Bui, Pratt and Zaharescu~\cite{BPZ} is to proceed with the usual method of applying the Cauchy-Schwarz inequality to obtain that
	\begin{equation}
		\label{eq:BPZC-S}
		\sumplus_{\substack{\chi \mods{q} \\ L(1/2, \chi) \neq 0}} 1 \geq \sumplus_{\substack{\chi \mods{q} \\ L_\chi(1/2) M(\chi) \neq 0}} 1 \geq \frac{\left\lvert\sideset{}{^{+}_{\chi \mods{q}}}\sum L_\chi(1/2) M(\chi)\right\rvert^2}{\sideset{}{^{+}_{\chi \mods{q}}}\sum \left\lvert L_\chi(1/2) M(\chi)\right\rvert ^2}.
	\end{equation}
	Bui, Pratt and Zaharescu then use~\eqref{mollified L-function} and~\eqref{second moment of B_i} to compute the first and second moments of $L_\chi(1/2)M(\chi)$ --- this gives that (assuming that $\psi$ is an exceptional character), on the right hand side of~\eqref{eq:BPZC-S}, the numerator equals $(1+o(1))\varphi^+(q)^2$ whereas the denominator equals $(2+o(1))\varphi^+(q)$. This yields the non-vanishing proportion $1/2+o(1)$.
	
	Note that the application of the Cauchy-Schwarz inequality in~\eqref{eq:BPZC-S} is costly, because by~\eqref{mollified L-function2} the mollified $L$-functions still oscillate --- our strategy is to dispose of the use of the Cauchy-Schwarz inequality and instead exploit the fact that \eqref{mollified L-function2} holds for individual L-functions. Actually, from~\eqref{mollified L-function2}, the equidistribution of the signs $\varepsilon(\chi)\varepsilon(\chi\psi)$ (Proposition~\ref{prop - equidistribution}) and~\eqref{second moment of B_i}, one can see that $L_\chi(1/2)M(\chi)$ for even $\chi \mods{q}$ are equidistributed in the circle $\left\lvert z-1\right\rvert = 1$, which directly implies the non-vanishing of $L_\chi(1/2)M(\chi)$ for almost all characters $\chi\mods q$.
	
	The variation of the root number has been utilized also in earlier (unconditional) results that involved a two-piece mollifier (see e.g. \cite{MV} and \cite{KMN}) --- in these works one used the Cauchy-Schwarz inequality, but optimized its application.
	
	\section{Proof of Theorem~\ref{thm - main result} assuming Proposition~\ref{prop - equidistribution}} 
	In this section we prove Theorem~\ref{thm - main result} assuming Proposition~\ref{prop - equidistribution}. Let $\varepsilon, D, \psi, C, q,$ and $\delta$ be as in Theorem~\ref{thm - main result} and let $C_0$ be as in~\eqref{mollified L-function2}. Now~\eqref{second moment of B_i} implies that
	\[
	\sumplus_{\chi\mods q} \mathbf{1}_{\left\lvert B_j(\chi)\right\rvert \geq \delta/(4C_0)} \ll \delta^{-2} \left(L(1,\psi) q(\log q)^{25+\varepsilon/2}+\frac{q}{(\log q)^{1-\varepsilon/2}}\right).
	\]
	Furthermore, by Proposition~\ref{prop - equidistribution} we know that $\left\lvert 1+\varepsilon(\chi) \varepsilon(\chi \psi)\right\rvert  \geq 2\delta$ for all $\chi \mods{q}$ apart from an exceptional set consisting of $\ll \delta \varphi(q) + q^{3/4}$ characters.
	
	Combining~\eqref{mollified L-function2} with the triangle inequality and these observations we obtain that, apart from an exceptional set of size
	\begin{equation}
		\label{eq:ExceptionalSet}
		\ll \delta^{-2} L(1,\psi) q(\log q)^{25+\varepsilon}+\delta^{-2} \frac{q}{(\log q)^{1-\varepsilon}} + \delta \varphi(q),
	\end{equation}
	we have
	\begin{equation}
		\label{eq:LMlower}
		\left\lvert L_\chi(1/2) M(\chi)\right\rvert \geq 2\delta- C_0\left(\frac{\delta}{4C_0} + \frac{\delta}{4C_0} + Q^{-1/2+\varepsilon}\right) > \delta. 
	\end{equation}
	This already yields~\eqref{eq:maintheorem} with $\mathbf{1}_{\left\lvert L(1/2, \chi)\right\rvert \geq \delta^{3/2}/(\log q)^{9/2}}$ replaced by $\mathbf{1}_{\left\lvert L(1/2, \chi)\right\rvert \neq 0}$ and is thus sufficient for obtaining Corollary~\ref{cor - main result}. We next proceed to showing a lower bound for $\left\lvert L(1/2, \chi)\right\rvert $ outside an acceptable exceptional set.	
	
	Recall that 
	\[
	L_\chi(1/2) M(\chi) = L(1/2, \chi) L(1/2, \chi\psi) M(\chi).
	\] 
	Since~\eqref{eq:LMlower} holds apart from an exceptional set of size~\eqref{eq:ExceptionalSet}, Theorem~\ref{thm - main result} follows if we can establish that there are at most $O(\delta \varphi(q))$ characters $\chi \mods{q}$ for which $\left\lvert L(1/2, \chi\psi) M(\chi)\right\rvert \geq \delta^{-1/2} (\log q)^{9/2}$. This follows if
	\begin{equation}
		\label{eq:quantbound}
		\sum_{\substack{\chi \mods{q} \\ \chi \neq \chi_0}} \left\lvert L(1/2, \chi\psi) M(\chi)\right\rvert^2 \ll \varphi(q) (\log q)^9,
	\end{equation}
	and so it suffices to establish~\eqref{eq:quantbound}. 
	
	By the approximate functional equation we have for any primitive character $\chi$ (see e.g.~\cite[formula (2.2)]{IS})
	\begin{equation}
		\label{eq:AFELchipsi}
		L(1/2, \chi \psi) = \sum_{n = 1}^\infty \frac{\chi(n)\psi(n) + \varepsilon(\chi \psi) \overline{\chi }(n)\overline{\psi}(n)}{\sqrt{n}} W\lz n\sqrt{\pi/(qD)}\pz,
	\end{equation}
	where $W$ (denoted by $V$ in~\cite{IS}) is such that $W(y) = 1+ O(y^{10})$ and $W(y) \ll y^{-10}$. Using these bounds we see that 
	\[
	L(1/2, \chi \psi) = \sum_{n \leq (qD)^{\frac34}} \frac{\chi(n)\psi(n) + \varepsilon(\chi \psi) \overline{\chi }(n)\overline{\psi}(n)}{\sqrt{n}} W\lz n\sqrt{\pi/(qD)}\pz + O\left(\frac{1}{qD}\right).
	\]
	Using also the definition of $M(\chi)$ (see~\eqref{eq:Mdef}) and the orthogonality of characters (adding back $\chi=\chi_0$), noting that $X(qD)^{3/4} \leq q$, we obtain
	\begin{align}
		\begin{aligned}
			\label{eq:Rest}
			&\sum_{\substack{\chi \mods{q} \\ \chi \neq \chi_0}} \left\lvert L(1/2, \chi\psi) M(\chi)\right\rvert^2 \\
			&\ll \sum_{\substack{\chi \mods{q}}} \vert M(\chi)\vert^2 \left\vert \sum_{n \leq (qD)^{\frac34}} \frac{\chi(n)\psi(n) + \varepsilon(\chi \psi) \overline{\chi }(n)\overline{\psi}(n)}{\sqrt{n}} W\lz n\sqrt{\pi/(qD)}\pz\right\vert^2 \\
			& \qquad + \frac{1}{q^2D^2} \sum_{\substack{\chi \mods{q}}} \vert M(\chi) \vert^2 \\
			&\ll \varphi(q) \sumtwo_{\substack{k_1, k_2 \leq (qD)^{3/4} \\ \ell_1, \ell_2 \leq X \\ k_1 \ell_1 = k_2 \ell_2}} \frac{\left\lvert(\mu \ast \mu\psi)(\ell_1)\right\rvert \left\lvert(\mu \ast \mu\psi)(\ell_2)\right\rvert}{\sqrt{k_1\ell_1 k_2 \ell_2}} + \frac{\varphi(q)}{q^2D^2} \sum_{\substack{n \leq X \\ D \nmid n}} \frac{\vert (\mu \ast \mu \psi)(n)\vert^2}{n} \\
			&\ll \varphi(q) \sum_{n \leq X (qD)^{3/4}} \frac{d_3(n)^2}{n} + \frac{X^{\varepsilon}}{qD^2} \ll  \varphi(q) \prod_{p \leq X (qD)^{3/4}} \left(1+\frac{3^2}{p}\right).
		\end{aligned}	
	\end{align}
	Now~\eqref{eq:quantbound} follows from Mertens' theorem, and so the proof of Theorem~\ref{thm - main result} is completed.
	
	The remaining task is to prove Proposition \ref{prop - equidistribution} which will be done in the following section.
	\section{Proof of Proposition \ref{prop - equidistribution}}
	\label{sec:proofofProp}
	In this section we prove Proposition~\ref{prop - equidistribution}. By the Erd\H{o}s-Tur\'an inequality (see e.g.~\cite[Corollary 1.1]{Montgomery} with $K = \lfloor q^{1/4} \rfloor$),
	\[
	\begin{aligned}
		\Bigg\vert\frac{1}{\varphi^+(q)}\,\, &\sumplus_{\chi \mods{q}} \mathbf{1}_{\theta_{\chi} \in (\alpha, \beta]} - (\beta-\alpha)\Bigg\vert\leq \frac{1}{q^{1/4}} + \frac{3}{\varphi^+(q)}\sum_{1 \leq k \leq q^{1/4}} \frac{1}{k} \left\lvert\quad \sumplus_{\chi \mods{q}} e(k \theta_{\chi})\right\rvert.
	\end{aligned}
	\]
	Since $e(k \theta_{\chi}) = (\varepsilon(\chi) \varepsilon(\chi \psi))^k$, Proposition \ref{prop - equidistribution} follows immediately from the following lemma.
	\begin{lemma}
		Let $k \in \mathbb{N}$.  Let $D > 1$ be a square-free fundamental discriminant and let $\psi$ be the associated primitive quadratic character modulo $D$. Let $q$ be a prime with $q \nmid D$. Then
		\[
		\left\lvert\ \, \sumplus_{\chi \mods{q}} \left(\varepsilon(\chi) \varepsilon(\chi \psi)\right)^k\right\rvert\leq \frac{1}{q^k} + 2k\cdot\frac{\varphi(q)}{q^{1/2}}.
		\]
	\end{lemma}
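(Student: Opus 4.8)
The plan is to expand the normalized Gauss sums, use orthogonality of characters modulo $q$ to collapse the character sum, and then bound the resulting exponential sums over residues modulo $q$.

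First I would write, using \eqref{sign as Gauss sum},
\[
\varepsilon(\chi)\varepsilon(\chi\psi) = \frac{1}{q}\sum_{\substack{a, b \mods q \\ (ab,q)=1}} \chi(a)\chi(b)\psi(b) \, e\!\left(\frac{a+b}{q}\right),
\]
where I have used that $\chi\psi$ has modulus $qD$ but its Gauss sum factors; more precisely, since $(q,D)=1$, one has $\tau(\chi\psi) = \psi(q)\tau(\chi)\tau_D(\psi)$ (or a clean variant of this), and $|\tau_D(\psi)| = \sqrt D$, so that $\varepsilon(\chi\psi)$ is again a normalized Gauss sum in the variable modulo $q$ twisted by $\psi$ evaluated on that variable. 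Raising to the $k$-th power gives a $2k$-fold sum over variables $a_1,\dots,a_k,b_1,\dots,b_k$ modulo $q$, each coprime to $q$, with a factor $\chi(a_1\cdots a_k b_1\cdots b_k)\psi(b_1\cdots b_k)$ and an additive character $e\!\left((a_1+\dots+a_k+b_1+\dots+b_k)/q\right)$, all divided by $q^k$.

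Next I would sum over primitive even characters $\chi \mods q$. Writing the indicator of ``primitive and even'' in terms of the full character sum (for prime $q$, primitivity just excludes $\chi_0$, and evenness is detected by $\tfrac12(1 + \chi(-1))$), the orthogonality relation forces $a_1\cdots a_k b_1\cdots b_k \equiv \pm 1 \mods q$, up to the contribution of $\chi_0$, which contributes the term $1/q^k$ (the diagonal-free piece, estimated trivially). So the main term is $\frac{\varphi(q)}{q^k}$ times a sum over $a_i, b_j$ coprime to $q$ with $\prod a_i \prod b_j \equiv \pm 1$, weighted by $\psi(\prod b_j) = \psi(\pm \prod a_i)^{-1}$ and by $e((\sum a_i + \sum b_j)/q)$. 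The hard part will be showing that this constrained exponential sum is $O(k\, q^{k-1/2})$, so that after dividing by $q^k/\varphi(q)$ we recover the claimed bound $2k\,\varphi(q)/q^{1/2}$.

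To handle that sum I would fix $a_1,\dots,a_k$ and $b_1,\dots,b_{k-1}$ freely (coprime to $q$), which determines $b_k \equiv \pm(a_1\cdots a_k b_1\cdots b_{k-1})^{-1} \mods q$; the remaining sum over the free variables of $e(b_k/q)\cdot e(\text{free part}/q)$ is, for each choice of sign, essentially a product of complete Kloosterman-type sums or, more simply, can be bounded by pulling out one variable and invoking the Weil bound $\lvert K(m,n;q)\rvert \le 2\sqrt q$ for Kloosterman sums modulo a prime. Concretely, summing over $a_1$ with everything else fixed gives $\sum_{a_1} e\!\left(\frac{a_1 + \mu a_1^{-1}}{q}\right)$ for a suitable $\mu$ depending on the other variables, which is a Kloosterman sum bounded by $2\sqrt q$; the remaining $2k-2$ variables each contribute at most $q$ (after also accounting for the two sign choices, which give the factor $2$), yielding a total of at most $2k \cdot 2\sqrt q \cdot q^{2k-2} \cdot \frac{1}{2}$-type bound — I would track constants carefully to land on exactly $2k\, q^{k-1/2}$ after the $q^k$ normalization. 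The main obstacle is precisely this bookkeeping: ensuring the Weil bound is applied to a genuinely nontrivial Kloosterman sum (i.e. that the relevant parameter is a unit, which holds since all variables are coprime to $q$) and that the power of $q$ and the factor $2k$ come out as stated, including the two-element ambiguity from $\pm 1$ and the parity-detecting $\tfrac12(1+\chi(-1))$.
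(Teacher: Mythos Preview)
Your overall strategy --- expand the Gauss sums, apply orthogonality of characters, and reduce to an exponential sum over residues modulo $q$ --- is exactly what the paper does. Two points, however, need attention.

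First, a minor one: your displayed expression for $\varepsilon(\chi)\varepsilon(\chi\psi)$ carries a factor $\psi(b)$ with $b$ running modulo $q$, which is not well-defined since $\psi$ has modulus $D$. The correct factorisation (which you allude to) is $\varepsilon(\chi\psi)=\chi(D)\psi(q)\varepsilon(\psi)\varepsilon(\chi)$, so that $\varepsilon(\chi)\varepsilon(\chi\psi)=\chi(D)\psi(q)\varepsilon(\psi)\varepsilon(\chi)^2$. The $\psi$-dependence thus comes out as a unimodular constant $\psi(q)^k\varepsilon(\psi)^k$, and the $2k$-fold sum involves only $\chi$ and the additive character; there is no $\psi(b_j)$ inside the sum. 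This is easily fixed and does not affect the shape of the argument.

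The real gap is in the estimation of the constrained exponential sum. After orthogonality you are left (up to unimodular factors) with
\[
\frac{\varphi(q)}{2q^k}\sum_{\pm}\ \sum_{\substack{a_1,\dots,a_{2k-1}\mods q\\ (a_j,q)=1}} e\!\left(\frac{a_1+\cdots+a_{2k-1}+\pm\,\overline{D^k a_1\cdots a_{2k-1}}}{q}\right).
\]
You propose to sum over one variable, say $a_1$, using the Weil bound $|K(1,\mu;q)|\le 2\sqrt{q}$, and then bound the remaining $2k-2$ variables trivially by $q$ each. That yields at most $2\cdot 2\sqrt{q}\cdot q^{2k-2}=4q^{2k-3/2}$, hence a final bound of order $\varphi(q)\,q^{k-3/2}$. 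For $k=1$ this is fine, but for $k\ge 2$ it is larger than the trivial bound $\varphi^+(q)$ on the original character sum, so your argument gives nothing. There is also no mechanism in your sketch that produces the factor $2k$; the ``$2k\cdot 2\sqrt{q}\cdot q^{2k-2}\cdot\tfrac12$'' you write down does not arise from a single application of Weil plus trivial bounds.

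What is actually needed is square-root cancellation in \emph{all} $2k-1$ free variables simultaneously: the inner sum is a $(2k-1)$-dimensional hyper-Kloosterman sum, and one must invoke a Deligne-type bound. The paper cites Smith's result \cite[Theorem~6]{Smith}, which gives the bound $d_{2k}(q)\,q^{(2k-1)/2}=2k\,q^{k-1/2}$; after multiplying by $\varphi(q)/q^k$ this produces exactly $2k\,\varphi(q)/q^{1/2}$. This is where the $2k$ comes from. You cannot recover this by iterating the one-variable Weil bound and bounding the rest trivially; you need the full hyper-Kloosterman estimate.
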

	\begin{proof}
		Our argument generalizes an argument in~\cite[Section 4]{BPZ} where the special case $k=1$ was established (see~\cite[formula (4.6)]{BPZ}).	Since $(q, D) = 1$, one gets (as pointed out in~\cite[page 603]{BPZ}) from~\eqref{sign as Gauss sum} and the Chinese remainder theorem 
		\begin{align*}
			\varepsilon(\chi \psi) &= \frac{1}{(Dq)^{1/2}} \sum_{\substack{a\mods{Dq} \\ (a, Dq) = 1}}\chi(a)\psi(a) e\bfrac{a}{Dq} \\
			&= \frac{1}{(Dq)^{1/2}} \sum_{\substack{b \mods D \\ (b, D) = 1}} \sum_{\substack{c \mods q \\ (c, q) = 1}} \chi(bq+cD)\psi(bq+cD) e\bfrac{bq+cD}{Dq} \\
			&= \chi(D) \psi(q) \varepsilon(\psi)\varepsilon(\chi).
		\end{align*}
		Hence
		\[
		\varepsilon(\chi) \varepsilon(\chi \psi) = \chi(D)\psi(q)\varepsilon(\psi)\varepsilon(\chi)^2
		\]
		and, using also \eqref{sign as Gauss sum},
		\begin{align}
			\label{eq:root1k}
			\begin{aligned}
				&\sumplus_{\substack{\chi \mods{q}}} \left(\varepsilon(\chi) \varepsilon(\chi \psi)\right)^k \\
				&= \frac{\psi(q)^k\varepsilon(\psi)^k}{q^k} \sum_{\substack{a_1, \dotsc, a_{2k}\mods q \\ (a_j, q) = 1}} e\left(\frac{a_1 + \dotsb + a_{2k}}{q}\right) \sumplus_{\chi \mods{q}} \chi(D^k a_1 \dotsm a_{2k}).
			\end{aligned}
		\end{align}
		By orthogonality of characters we have, for any prime $q$ and integers $m, n$ such that $(mn, q) = 1$,
		\begin{align*}
			\sumplus_{\chi\mods q}\chi(m)\overline\chi(n)&=\frac{1}{2} \sumstar_{\chi\mods q}(1+\chi(-1))\chi(m)\overline\chi(n) \\
			&= \frac{1}{2} \sum_{\chi\mods q}(1+\chi(-1))\chi(m)\overline\chi(n) - 1 =\mathbf{1}_{q\mid(m\pm n)}\frac{\varphi(q)}{2}-1.
		\end{align*}
		
		Applying this to~\eqref{eq:root1k}, we obtain
		\begin{align}
			\begin{aligned}
				\label{eq:rootOut}
				\sumplus_{\substack{\chi \mods{q}}} \left(\varepsilon(\chi) \varepsilon(\chi \psi)\right)^k &= \frac{\psi(q)^k\varepsilon(\psi)^k}{2q^k} \varphi(q) \sum_{\substack{a_1, \dotsc, a_{2k} \mods q\\ D^k a_1 \dotsm a_{2k} \equiv \pm 1 \mods{q}}} e\left(\frac{a_1 + \dotsb + a_{2k}}{q}\right) \\
				&\qquad  - \frac{\psi(q)^k\varepsilon(\psi)^k}{q^k} \sum_{\substack{a_1, \dotsc, a_{2k} \mods q\\ (a_j, q) = 1}} e\left(\frac{a_1 + \dotsb + a_{2k}}{q}\right).
			\end{aligned}
		\end{align}
		The second term on the right hand side of~\eqref{eq:rootOut} equals
		\[
		-\frac{\psi(q)^k\varepsilon(\psi)^k}{q^k} \left(\sum_{a=1}^{q-1} e\left(\frac{a}{q}\right)\right)^{2k} = -\frac{\psi(q)^k\varepsilon(\psi)^k}{q^k},
		\]
		and thus has absolute value at most $1/q^k$.
		
		On the other hand, the first term on the right hand side of~\eqref{eq:rootOut} equals
		\begin{align}
			\label{eq:firstterm}
			&  \frac{\psi(q)^k\varepsilon(\psi)^k}{2q^k} \varphi(q) \sum_{\ell=0}^1 \sum_{\substack{a_1, \dotsc, a_{2k-1}\mods q \\ (a_j, q) = 1}} e\left(\frac{a_1 + \dotsb + a_{2k-1} + (-1)^\ell \overline{D^k a_1 \dotsm a_{2k-1}}}{q}\right).
		\end{align}
		Here we have a $2k-1$-dimensional Kloosterman sum and by a bound of Smith \cite[Theorem 6]{Smith}, the absolute value of~\eqref{eq:firstterm} is
		\[
		\leq \frac{\varphi(q)}{2q^k} \cdot 2 \cdot q^{(2k-1)/2}d_{2k}(q) = \frac{\varphi(q)}{q^{1/2}} d_{2k}(q).
		\]
		Now the claim follows since $q$ is a prime, so $d_{2k}(q) = 2k$.
	\end{proof}
	
	\section*{Statements and Declarations}
	On behalf of all authors, the corresponding author states that there is no conflict of interest.

\end{document}